\documentclass[11pt,a4paper,reqno]{amsart}

\usepackage[T1]{fontenc}
\usepackage[english]{babel}
\usepackage{mathtools}
\usepackage{amssymb}
\usepackage{libertinus}
\usepackage[cal=boondoxo,bb=ams]{mathalfa}
\usepackage{microtype}
\usepackage{enumitem}
\usepackage{array,booktabs,tabularx}
\usepackage{etoolbox}

\allowdisplaybreaks[2]
\usepackage[a4paper,left=30mm,right=30mm,top=27mm,bottom=30mm,headsep=8mm,footskip=13mm,heightrounded]{geometry}

\setlist[itemize]{leftmargin=2.1em,itemsep=0.25em,topsep=0.45em,parsep=0pt}
\setlist[enumerate]{leftmargin=2.1em,itemsep=0.25em,topsep=0.45em,parsep=0pt}

\numberwithin{equation}{section}

\makeatletter
\def\@settitle{%
 \begin{center}
  \vspace*{-0.4em}
  {\normalfont\bfseries\fontsize{17}{21}\selectfont\@title\par}
  \vspace{0.7em}
 \end{center}}
\renewcommand\section{\@startsection{section}{1}{\z@}%
 {1.5\baselineskip plus 0.2\baselineskip minus 0.1\baselineskip}%
 {0.65\baselineskip}{\normalfont\Large\bfseries}}
\renewcommand\subsection{\@startsection{subsection}{2}{\z@}%
 {1.15\baselineskip plus 0.2\baselineskip minus 0.1\baselineskip}%
 {0.45\baselineskip}{\normalfont\large\bfseries}}
\renewcommand\subsubsection{\@startsection{subsubsection}{3}{\z@}%
 {0.9\baselineskip plus 0.15\baselineskip minus 0.1\baselineskip}%
 {0.35\baselineskip}{\normalfont\normalsize\bfseries}}
\makeatother

\makeatletter
\patchcmd{\@setauthors}{\centering\footnotesize}{\centering\normalsize}{}{}
\patchcmd{\@setauthors}{\MakeUppercase{\authors}}{\authors}{}{}
\patchcmd{\maketitle}{\uppercasenonmath\shorttitle}{}{}{}
\patchcmd{\maketitle}{\@nx\MakeUppercase{\the\toks@}}{\the\toks@}{}{}
\makeatother

\usepackage{hyperref}
\hypersetup{hidelinks,pdfencoding=auto,
 pdftitle={Critical semilinear damped wave equations beyond moduli of continuity},
 pdfauthor={Wenhui Chen},
 pdfsubject={Semilinear damped wave equations with critical nonlinearities},
 pdfkeywords={damped wave equation, critical nonlinearity, Dini condition}}
\theoremstyle{plain}
\newtheorem{theorem}{Theorem}[section]
\newtheorem{lemma}[theorem]{Lemma}

\theoremstyle{definition}
\newtheorem{defn}[theorem]{Definition}

\theoremstyle{definition}
\newtheorem{assum}[theorem]{Assumption}

\theoremstyle{definition}
\newtheorem{example}[theorem]{Example}

\theoremstyle{remark}
\newtheorem{remark}[theorem]{Remark}

\newcommand{\ml}{\mathcal}
\newcommand{\mb}{\mathbb}
\newcommand{\dd}{\,\mathrm{d}}
\newcommand{\lin}{\mathrm{lin}}

\title[Critical damped waves beyond moduli]{Critical semilinear damped wave equations beyond\\ moduli of continuity}

\author[W. Chen]{Wenhui Chen}
\address{School of Mathematics and Information Science, Guangzhou University, Guangzhou 510006, P. R. China}
\email{wenhui.chen.math@gmail.com}

\keywords{semilinear damped wave equation, critical nonlinearity, Dini condition, global in-time existence}
\subjclass[2020]{Primary 35L71, Secondary 35B33, 35A01}
\date{}

\begin{document}

\begin{abstract}
We prove small data global in-time existence for semilinear classical damped wave equations with general nondecreasing nonlinearities satisfying a Dini-type integrability condition. The result extends the global in-time existence part of \cite{Ebert-Girardi-Reissig=2020} without using the increasing property and the scale-invariant differential assumptions imposed on the modulus of continuity. The proof is based on amplitude-layer decomposition and Hardy-Stieltjes estimates.
\end{abstract}

\maketitle

\section{Introduction}

We consider the following Cauchy problem for the semilinear damped wave equations:
\begin{align}\label{Main-Problem}
 \begin{cases}
  u_{tt}-\Delta u+u_t=F(|u|),&x\in\mb R^n,\ t>0,\\
  (u,u_t)(0,x)=(u_0,u_1)(x),&x\in\mb R^n,
 \end{cases}
\end{align}
in space dimensions $n\in\{1,2\}$, where the nonlinearity $F=F(s)$ is assumed to be nonnegative and nondecreasing for $s\geqslant0$.

For the power nonlinearity $F(s)=s^p$ with $p>1$, the threshold between small data global in-time existence and finite-time blow-up is the Fujita exponent (appears in the semilinear heat equation in \cite{Fujita=1966} originally)
\begin{align*}
 p_{\mathrm F}(n):=1+\frac{2}{n}\ \ \mbox{for}\ \ n\in\mb{N}.
\end{align*}
Its optimality was established in \cite{Matsumura=1976,Todorova-Yordanov=2001,Zhang=2001,Ikehata-Ohta=2002,Narazaki=2004}. We refer to these papers and the references therein for the power-type theory.

However, the power scale is too rough to describe the borderline regularity of the nonlinearity at $p=p_{\mathrm F}(n)$. Indeed, if $\mu(s)=s^\alpha$ for some $\alpha\in(0,1]$, then $s^{p_{\mathrm F}(n)}\mu(s)=s^{p_{\mathrm F}(n)+\alpha}$ belongs to the supercritical (global in-time existence) range, whereas the limiting choice $\mu(s)= s^0\equiv1$ gives the critical exponent (blow-up). The recent work \cite{Ebert-Girardi-Reissig=2020} proposed the problem of the critical regularity of nonlinearities, where the H\"older factor is replaced by a general modulus of continuity (MoC), namely,
\begin{align}\label{EGR-Nonlinearity}
 F(s)=s^{p_{\mathrm F}(n)}\mu(s).
\end{align}
The behavior of $\mu$ at the origin then refines the H\"older scale at the critical exponent.

The Cauchy problem \eqref{Main-Problem} associated with \eqref{EGR-Nonlinearity} and the MoC $\mu=\mu(s)$ was studied in \cite{Ebert-Girardi-Reissig=2020}, where $\mu$ is continuous, increasing and concave with $\mu(0)=0$. For $n\in\{1,2\}$, the small data global in-time existence was proved under the Dini condition
\begin{align}\label{Dini-Mu}
 \int_0^{\delta_0}\frac{\mu(s)}{s}\dd s<\infty\ \ \mbox{for some}\ \ \delta_0>0
\end{align}
and suitable scale-invariant differential assumptions on $\mu$, i.e.
\begin{align*}
s^k|\mu^{(k)}(s)|\leqslant C\mu(s)\ \ \mbox{for}\ \ 1\leqslant k\leqslant n,\ s\in(0,s_0].
\end{align*}
 The divergence of the Dini integral gives finite-time blow-up under an additional convexity assumption on the full nonlinearity. In the global in-time existence argument, the increasing property of $\mu$ is always used through (see \cite[Page 6]{Ebert-Girardi-Reissig=2020})
\begin{align}\label{Freezing-Step}
 \mu\bigl(|u(t,x)|\bigr)\leqslant\mu\bigl(\|u(t,\cdot)\|_{L^\infty}\bigr).
\end{align}
The time decay of the $L^\infty$ norm transforms the nonlinear time integral into \eqref{Dini-Mu}. For $n=2$, the $L^\infty$ estimate also requires an estimate of the derivative of the nonlinear term (see \cite[Page 7]{Ebert-Girardi-Reissig=2020}) and therefore uses the differential assumptions on $\mu$. Subsequently, the critical regularity of nonlinearities has been investigated for more general dissipative evolution equations in \cite{Dao-Reissig=2021,Djaouti-Reissig=2023,DAbbicco-Girardi=2023,Girardi=2025,Chen-Girardi=2025,Dao-Son=2025,Tang-Duong=2026,Tang-Dao-Cung=2026,Chen-Exterior=2026}, and for classical wave equations in \cite{Chen-Reissig=2024,Wang-Zhang=2024,Chen-Palmieri=2024,Shao=2024}. A general overview can be found in \cite{Chen-Girardi-Review=2026}. Generally speaking, in the global in-time existence arguments considered above, the monotonicity assumption is used to justify the crucial step in \eqref{Freezing-Step}.

The purpose of the present note is to prove the global in-time existence for \eqref{Main-Problem} without using \eqref{Freezing-Step}, by developing an argument that can also be applied to the related models discussed above. We impose the following Dini-type integrability condition:
\begin{align}\label{Intrinsic-Dini}
 \int_0^{\delta_0}\frac{F(s)}{s^{p_{\mathrm F}(n)+1}}\dd s<\infty\ \ \mbox{for some}\ \ \delta_0>0.
\end{align}
Instead of \eqref{Freezing-Step}, we estimate the nonlinear term through the level sets of $|u|$, which leads naturally to Lebesgue-Stieltjes integrals and Hardy-type estimates.

Before stating our theorem, let us define the initial data space (the same as \cite[Theorem 3]{Ebert-Girardi-Reissig=2020})
\begin{align*}
 \ml A_n:=\bigl(H^{N_n}\cap L^1\bigr)\times\bigl(H^{N_n-1}\cap L^1\bigr)\ \ \mbox{carrying}\ \ N_n:=1+\left\lfloor\frac{n}{2}\right\rfloor,
\end{align*}
endowed with the natural product norm.

\begin{assum}[Assumption on nonlinearity]\label{Assum-Chen}
The function $F:[0,\infty)\to[0,\infty)$ is nondecreasing, $F(0)=0$, and its even extension $\ml F_e(z):=F(|z|)$ for $z\in\mb R$ belongs to $\ml C^{N_n}(\mb R)$.
\end{assum}

\begin{defn}[Mild Sobolev solution]\label{Defn-Mild-Sobolev-Solution}
For $T>0$, a function
\begin{align*}
	u\in\ml C\bigl([0,T),H^{N_n}\bigr)	\cap\ml C^1\bigl([0,T),H^{N_n-1}\bigr)
\end{align*}
is called a mild Sobolev solution to \eqref{Main-Problem} provided that
\begin{align*}
	u(t,\cdot)	=K_0(t)u_0(\cdot)+K_1(t)u_1(\cdot)	+\int_0^tK_1(t-s)F\big(|u(s,\cdot)|\big)\dd s\ \ \mbox{in}\ \ H^{N_n}
\end{align*}
holds for every $t\in[0,T)$. Here, $K_0(t)$ and $K_1(t)$ stand for the position and velocity propagators, respectively, for the homogeneous classical damped wave equation.
\end{defn}

\begin{theorem}[Global in-time existence]\label{Thm-Main}
 Let $n\in\{1,2\}$, and let $F=F(s)$ satisfy Assumption~\ref{Assum-Chen} as well as \eqref{Intrinsic-Dini}. We take $m=2$ if $n=1$, whereas $m\in(1,2)$ is fixed if $n=2$. There exists $\varepsilon_0>0$ such that, for every $(u_0,u_1)\in\ml A_n$ satisfying
 \begin{align*}
  \|(u_0,u_1)\|_{\ml A_n}\leqslant\varepsilon\leqslant\varepsilon_0,
 \end{align*}
 the Cauchy problem \eqref{Main-Problem} admits a unique global in-time mild Sobolev solution in the sense of Definition~\ref{Defn-Mild-Sobolev-Solution}. Furthermore, this solution fulfills the decay estimates
 \begin{align*}
  \|u(t,\cdot)\|_{L^m}&\lesssim\varepsilon\langle t\rangle^{-\frac{n}{2}\left(1-\frac{1}{m}\right)},\\
  \|\nabla^j u(t,\cdot)\|_{L^2}&\lesssim\varepsilon\langle t\rangle^{-\frac{n}{4}-\frac{j}{2}}\ \ \mbox{for}\ \ j\in\{0,1\},\\
  \|u(t,\cdot)\|_{L^\infty}&\lesssim\varepsilon\langle t\rangle^{-\frac{n}{2}},
 \end{align*}
 for every $t\geqslant0$.
\end{theorem}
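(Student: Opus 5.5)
We follow the standard Banach fixed-point scheme of \cite{Ebert-Girardi-Reissig=2020}, but we replace the freezing step \eqref{Freezing-Step} by a layer-cake treatment of $F(|u|)$. For $T>0$ we introduce the evolution space $X(T)$ of functions $u\in\ml C([0,T),H^{N_n})\cap\ml C^1([0,T),H^{N_n-1})$ with norm
\begin{align*}
 \|u\|_{X(T)}:=\sup_{t\in[0,T)}\Bigl(\langle t\rangle^{\frac n2(1-\frac1m)}\|u(t)\|_{L^m}+\sum_{j\leqslant N_n}\langle t\rangle^{\frac n4+\frac j2}\|\nabla^ju(t)\|_{L^2}+\langle t\rangle^{\frac n2}\|u(t)\|_{L^\infty}\Bigr),
\end{align*}
and the operator $Nu:=K_0(t)u_0+K_1(t)u_1+\int_0^tK_1(t-s)F(|u(s)|)\dd s$. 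The linear part is controlled by the known $(L^1\cap L^2)$--$L^q$ Matsumura-type estimates, giving $\|K_0u_0+K_1u_1\|_{X(T)}\lesssim\varepsilon$. For $n=2$ we bound $\|u\|_{L^\infty}$ via $H^{2}$ in the high frequencies and via $L^m$--$L^\infty$ with $m\in(1,2)$ in the low frequencies. This avoids the derivative of $\mu$ used in \cite{Ebert-Girardi-Reissig=2020}, since we only need $\|F(|u|)\|_{L^1\cap L^m}$ together with $\|\nabla^{N_n}F(|u|)\|_{L^2}$. The latter follows from $\ml F_e\in\ml C^{N_n}$ through the chain rule (Fa\`a di Bruno), with the lower-order products handled by Gagliardo--Nirenberg. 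In this step the constants depend only on $\sup_{|z|\leqslant1}|\ml F_e^{(k)}(z)|$, and $|u|\leqslant1$ holds for small data.

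The key new step is the $L^1$ (and $L^m$) estimate of $F(|u(s)|)$ without monotonicity of $F(s)/s^{p_{\mathrm F}}$. We write $F(|u|)=\int_{(0,|u|]}\dd F(\lambda)$ as a Lebesgue--Stieltjes integral, which is valid because $F$ is nondecreasing with $F(0)=0$. Fubini then gives
\begin{align*}
 \|F(|u(s)|)\|_{L^1}=\int_{(0,\|u(s)\|_{L^\infty}]}\bigl|\{|u(s)|\geqslant\lambda\}\bigr|\dd F(\lambda)\leqslant\int_{(0,\|u(s)\|_{L^\infty}]}\min\bigl\{\lambda^{-2}\|u(s)\|_{L^2}^2,\ \lambda^{-m}\|u(s)\|_{L^m}^m\bigr\}\dd F(\lambda),
\end{align*}
using Chebyshev. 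We insert the decay bounds with $\|u\|_{X}\leqslant M\varepsilon$ and then perform the Duhamel time integral, $\int_0^t\langle t-s\rangle^{-\gamma}\|F(|u(s)|)\|_{L^1\cap L^2}\dd s$. The crucial computation is to interchange the $s$- and $\lambda$-integrals: the set of $s$ for which $\lambda\leqslant M\varepsilon\langle s\rangle^{-n/2}$ is $\langle s\rangle\leqslant(M\varepsilon/\lambda)^{2/n}$, and integrating the Chebyshev weight (the $L^2$ bound gives $\lambda^{-2}\varepsilon^2\langle s\rangle^{-n/2}$) over this range yields roughly $\lambda^{-2}(\varepsilon/\lambda)^{(2/n)(1-n/2)}$-type powers, which combine into $\lambda^{-p_{\mathrm F}(n)}$ times a power of $\varepsilon$. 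This produces a Hardy--Stieltjes quantity $\int_0^{M\varepsilon}\lambda^{-p_{\mathrm F}}\dd F(\lambda)$. An integration by parts then converts it into $F(M\varepsilon)(M\varepsilon)^{-p_{\mathrm F}}+p_{\mathrm F}\int_0^{M\varepsilon}F(\lambda)\lambda^{-p_{\mathrm F}-1}\dd\lambda$. The boundary term at $0$ vanishes by \eqref{Intrinsic-Dini} combined with monotonicity, because $F(\lambda)\lambda^{-p_{\mathrm F}}\lesssim\int_\lambda^{2\lambda}F(s)s^{-p_{\mathrm F}-1}\dd s\to0$. Both terms are $o(1)\cdot\varepsilon^{p_{\mathrm F}}$ up to the matching power, so they are small relative to $\varepsilon$. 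We treat the regions $s\leqslant t/2$ and $s\geqslant t/2$ of the Duhamel integral separately, as usual, using in the second region the $L^2$, $L^m$ and $L^\infty$ norms of $F(|u|)$ (bounded similarly, or via $F(|u|)\leqslant F(\|u\|_\infty)$, which only uses monotonicity of $F$ itself).

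With this we obtain $\|Nu\|_{X(T)}\leqslant C\varepsilon+C\,\omega(M\varepsilon)\,M\varepsilon$, where $\omega(r)\to0$ as $r\to0$. The contraction estimate $\|Nu-Nv\|_{X(T)}$ requires $F(|u|)-F(|v|)$. Writing this as $\int_{|v|}^{|u|}\dd F$ and applying the same layer argument to the sets $\{\min(|u|,|v|)<\lambda\leqslant\max\}$ is delicate, so first we would try the Lipschitz route. Since $\ml F_e\in\ml C^1$ with $\ml F_e'(0)=0$, we have $|F(|u|)-F(|v|)|\leqslant\sup_{|z|\leqslant\max}|\ml F_e'(z)|\,|u-v|$, which gives existence (via the fixed-point argument and a continuity argument in $T$) and uniqueness, with possibly weaker bounds in the difference norm. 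Global existence and the decay estimates then follow uniformly in $T$. I expect the main obstacle to be the Stieltjes--Fubini bookkeeping in the low-dimensional time integral, namely getting exactly the exponent $p_{\mathrm F}(n)+1$ to appear and dealing with the logarithmic borderline. In that step I would first try splitting the $s$-range dyadically into $\langle s\rangle\sim2^k$ and summing over $k$ against the layers $\lambda\sim\varepsilon2^{-kn/2}$.
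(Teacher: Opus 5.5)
Your core mechanism is the paper's: the layer-cake formula in $\mathrm{d}F$, Chebyshev on level sets, exchanging the $s$- and $\lambda$-integrals over $\{\lambda\leqslant M\varepsilon\langle s\rangle^{-n/2}\}$ (the paper's change of variables $a=A(t)$ plus the Hardy--Stieltjes lemma), and the same $\int_\lambda^{2\lambda}$ argument for the boundary term. But the explicit computation you give uses the $L^2$ Chebyshev weight, and it breaks down in the critical two-dimensional case. For $n=2$ one has $\int_{\{\langle s\rangle\leqslant M\varepsilon/\lambda\}}\langle s\rangle^{-1}\,\mathrm{d}s=\log(M\varepsilon/\lambda)$, so you arrive at $\int_{(0,M\varepsilon]}\lambda^{-2}\log(M\varepsilon/\lambda)\,\mathrm{d}F(\lambda)$. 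This quantity is not controlled by \eqref{Intrinsic-Dini}; take e.g.\ $F(s)=s^2/\log^2(1/s)$ near $0$. Dyadic splitting in $s$ does not cure this, because each of the $\approx\log(M\varepsilon/\lambda)$ blocks contributes a comparable amount $\lambda^{-2}(M\varepsilon)^2$. The cure is the other branch of your own minimum. With $1<m<2$, the integral $\int_{\{\langle s\rangle\leqslant M\varepsilon/\lambda\}}\langle s\rangle^{-(m-1)}\,\mathrm{d}s\lesssim(2-m)^{-1}(M\varepsilon/\lambda)^{2-m}$ is dominated by its upper endpoint and gives exactly $(M\varepsilon)^2\lambda^{-2}$. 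This is why the paper works with $\ml T_m[F]$ and Hardy--Stieltjes with $\ell=m<q$ strictly. For $n=1$ you have $m=2<3$, and your $L^2$ computation is fine.

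Second, the global contraction in $X(T)$ cannot come from the Lipschitz bound. To close the difference estimate at the critical rate you would need $\sup_{|z|\leqslant a}|\ml F_e'(z)|$ to be $a^{p_{\mathrm F}-1}$ times a Dini-integrable factor. But \eqref{Intrinsic-Dini} constrains $F$, not $F'$: steep local increments of $F$ are admissible, and even a modulus of exact size $a^{p_{\mathrm F}-1}$ loses a logarithm in time. So the decay bounds must come from the a priori inequality $\|u\|_{X(T)}\leqslant C\varepsilon+C\|u\|_{X(T)}^{p_{\mathrm F}}\Lambda_F(\|u\|_{X(T)})$ alone. Existence must then come from local well-posedness in $H^{N_n}\times H^{N_n-1}$, a blow-up alternative, and a continuity argument in $T$, as in Lemma~\ref{Lemma-Local} and the paper's bootstrap. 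The continuation step needs control of $u_t$ in $H^{N_n-1}$, which your $X(T)$ does not contain; the paper obtains it from energy estimates once $\|u\|_{L^\infty}$ is bounded.

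Finally, your $n=2$ route via $H^2$ at high frequencies and the chain rule is viable under $\ml F_e\in\ml C^2$. Indeed $|\ml F_e'(z)|\lesssim|z|$, which makes $\|\nabla F(|u|)\|_{L^2}\lesssim M^2\langle s\rangle^{-2}$ non-critical. However, once $\nabla^2u$ with rate $\langle t\rangle^{-3/2}$ is in your norm, you must use these derivative bounds also at low frequencies on $[t/2,t]$. There, $L^p$ bounds on $F(|u|)$ alone only yield $\langle t\rangle^{-3/2}\log\langle t\rangle$. The paper avoids this by keeping only $j\in\{0,1\}$ and using the $L^r$--$L^\infty$ estimate with $r>2$, so no derivative of the nonlinearity enters the decay estimates.
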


\begin{example}[Modulus-type nonlinearities]\label{Example-MoC}
	Let
	\begin{align*}
		F(s)=s^{p_{\mathrm F}(n)}\mu(s).
	\end{align*}
	For the MoC functions considered in \cite[Theorem 3]{Ebert-Girardi-Reissig=2020}, the condition \eqref{Intrinsic-Dini} reduces to \eqref{Dini-Mu}. The differential assumptions in \cite[Theorem 3]{Ebert-Girardi-Reissig=2020} also imply the even-extension regularity in Assumption \ref{Assum-Chen}. Hence, Theorem~\ref{Thm-Main} recovers the corresponding global in-time existence result.
\end{example}

\begin{example}[A non-monotone quotient]\label{Example-Nonmonotone}
	Let $\eta\in\mathcal C^\infty\big([0,\infty)\big)$ be nondecreasing such that $\eta(s)=0$ if $0\leqslant s\leqslant1$ and $\eta(s)=1$ if $s\geqslant 2$. Set
	\begin{align*}
		a_k:=\mathrm{e}^{-\frac{1}{r_k}}\ \ \mbox{with}\ \ r_k:=2^{-2^k},
	\end{align*}
	and define
	\begin{align*}
		F(s):=\sum_{k=1}^{\infty}a_k\,\eta\left(\frac{s}{r_k}\right).
	\end{align*}
	Then $F$ is nondecreasing and smooth, with $F(0)=0$, and its even extension belongs to $\mathcal C^\infty(\mathbb R)$. Moreover, the Dini-type integrability condition \eqref{Intrinsic-Dini} is satisfied because
	\begin{align*}
		\int_0^{\delta_0}\frac{F(s)}{s^{{p_{\mathrm F}(n)}+1}}\dd s&=\sum\limits_{k=1}^{\infty}a_k\int_{0}^{\delta_0}\frac{\eta(\frac{s}{r_k})}{s^{p_{\mathrm F}(n)+1}}\dd s\\
		&\leqslant\sum\limits_{k=1}^{\infty}a_k\int_{r_k}^{\infty}\frac{\dd s}{s^{p_{\mathrm{F}}(n)+1}}=\frac{1}{{p_{\mathrm F}(n)}}\sum\limits_{k=1}^{\infty}\frac{a_k}{r_k^{p_{\mathrm{F}}(n)}}\\
		&\leqslant\frac{1}{{p_{\mathrm F}(n)}}\sum\limits_{k=1}^{\infty}\mathrm{e}^{-2^{2^k}}2^{2^kp_{\mathrm F}(n)}<\infty.
	\end{align*}
	On the other hand, taking $\rho_k:=2r_{k+1}$ and $\sigma_k:=r_k$ so that $\rho_k<\sigma_k$, we have
	\begin{align*}
		F(\rho_k)=F(\sigma_k)=\sum_{j=k+1}^{\infty}a_j,
	\end{align*}
	whereas
	\begin{align*}
		\frac{F(\rho_k)\rho_k^{-{p_{\mathrm F}(n)}}}{F(\sigma_k)\sigma_k^{-{p_{\mathrm F}(n)}}}=\left(\frac{r_k}{2r_{k+1}}\right)^{p_{\mathrm F}(n)}=2^{{p_{\mathrm F}(n)}(2^k-1)}\rightarrow\infty\ \ \mbox{as}\ \ k\to\infty.
	\end{align*}
	Hence, $s\mapsto\mu_0(s):=F(s)s^{-{p_{\mathrm F}(n)}}$ is not increasing. This gives an admissible nonlinearity covered by Theorem~\ref{Thm-Main} whose normalized quotient is not increasing.
\end{example}

\begin{example}[Beyond the second-order differential condition]
	Let $n=2$ so that $p_{\mathrm F}(2)=2$. We construct an increasing and concave function $\mu$ satisfying the Dini condition \eqref{Dini-Mu}, but for which
	\begin{align}
		\sup_{0<s\leqslant \delta_0}\frac{s^2|\mu''(s)|}{\mu(s)}=\infty.\label{Failure-Second-Derivative}
	\end{align}
	
	Let $\eta\in\mathcal C^\infty(\mathbb R)$ be nondecreasing such that $\eta(s)=0$ if $s\leqslant 0$ and $\eta(s)=1$ if $s\geqslant 1$. We next denote
	\begin{align*}
		r_k:=2^{-k},\ \ \varepsilon_k:=r_k^2,\ \ \delta_k:=r_k^{\frac{7}{2}},
	\end{align*}
	and, after taking $k_0$ sufficiently large, define
	\begin{align*}
		\mu(s):=\int_0^s y(\rho)\dd \rho\ \ \mbox{with}\ \ y(s):=2-\sum_{k=k_0}^{\infty}\varepsilon_k\,\eta\left(\frac{s-r_k}{\delta_k}\right),
	\end{align*}
	for $0\leqslant s\leqslant\delta_0$, where $\delta_0>0$ is sufficiently small.
	
	Since $\sum_{k=k_0}^{\infty}\varepsilon_k<\infty$, the series defining $y$ converges uniformly. The transition intervals $[r_k,r_k+\delta_k]$ are mutually disjoint for large $k_0$ so that the series obtained after differentiation is locally finite on $(0,\infty)$. A direct computation implies
	\begin{align*}
		y'(s)=-\sum_{k=k_0}^{\infty}\frac{\varepsilon_k}{\delta_k}\,\eta'\left(\frac{s-r_k}{\delta_k}\right)\leqslant0,
	\end{align*}
	which means $1\leqslant 2-\frac{4}{3}4^{-k_0}\leqslant y(s)\leqslant2$ for sufficiently large $k_0$. Consequently, $\mu$ is increasing and concave, and $s\leqslant\mu(s)\leqslant2s$. In particular, the Dini condition \eqref{Dini-Mu} is valid now.
	
	Choose $t_0\in(0,1)$ such that $\eta'(t_0)>0$, and set $s_k:=r_k+t_0\delta_k$. Then $s_k\approx r_k$, while
	\begin{align*}
		|\mu''(s_k)|=\frac{\varepsilon_k}{\delta_k}\eta'(t_0)=r_k^{-\frac32}\eta'(t_0)\ \ \Rightarrow\ \ \frac{s_k^2|\mu''(s_k)|}{\mu(s_k)}\gtrsim r_k^{-\frac12}\rightarrow\infty
	\end{align*}
	by taking $k\to\infty$, which justifies \eqref{Failure-Second-Derivative}.
	
	Let us consider the nonlinearity
	\begin{align*}
		F(s):=s^2\mu(s).
	\end{align*}
	Since $s^2|\mu''(s)|\lesssim r_k^{\frac12}\rightarrow0$ on the $k$-th transition interval as $k\to\infty$, while $\mu(s)\lesssim s$ and $s\mu'(s)\lesssim s$, it follows that
	\begin{align*}
		F(s)\to0,\ \  F'(s)\to0,\ \  F''(s)\to0\ \ \mbox{as}\ \ s\to0^+.
	\end{align*}
	After a $\ml C^2$ nondecreasing extension of $F$ from $[0,\delta_0]$ to $[0,\infty)$, the even extension of $F$ belongs to $\mathcal C^2(\mathbb R)$. Note that $F'(s)=2s\mu(s)+s^2\mu'(s)\geqslant0$. Therefore, $F$ satisfies the assumptions of Theorem~\ref{Thm-Main}, although the second-order scale-invariant differential condition imposed in \cite[Theorem 3]{Ebert-Girardi-Reissig=2020} fails.
\end{example}

\medskip
\paragraph{Notation.}
 The constants $C$ and $c$ are positive and may change from line to line. We write $f\lesssim g$ if $f\leqslant Cg$, and $f\approx g$ if $f\lesssim g$ and $g\lesssim f$. We use $\langle t\rangle:=1+t$. All Lebesgue and Sobolev spaces are defined over $\mb R^n$ unless a domain is displayed. For a nondecreasing function $F$, the symbol $\mathrm{d}F$ denotes its Lebesgue-Stieltjes measure. Since the nonlinearity in the main theorem is $\ml{C}^1\bigl([0,\infty)\bigr)$, the Stieltjes integrals below may equivalently be read as integrals against $F'(s)\dd s$. 

\section{Amplitude-layer estimates for general nonlinearities}

Throughout this section, let $q>1$ and let $F:[0,\delta_0]\to[0,\infty)$ be continuous and nondecreasing with $F(0)=0$. For $\ell\geqslant1$ and $0<a\leqslant\delta_0$, we define
\begin{align*}
 \ml T_\ell[F](a):=\int_{(0,a]}s^{-\ell}\dd F(s)
\end{align*}
and introduce the $q$-critical Stieltjes functional
\begin{align}\label{Lambda-Definition}
 \Lambda_F(a):=\ml T_q[F](a)=\int_{(0,a]}s^{-q}\dd F(s).
\end{align}

\subsection{Critical Stieltjes functionals and Hardy estimates}

The functional in \eqref{Lambda-Definition} can be written in terms of the critical Dini-type integral.

\begin{lemma}[Critical integration by parts]
 Assume that
 \begin{align}\label{Generic-Dini}
  \int_0^{\delta_0}\frac{F(s)}{s^{q+1}}\dd s<\infty.
 \end{align}
 Then 
 \begin{align}\label{F-Little-o}
  \lim_{a\to0^+}\frac{F(a)}{a^q}=0.
 \end{align}
 Moreover, for every $0<a\leqslant\delta_0$, one has
 \begin{align}\label{Lambda-Identity}
  \Lambda_F(a)=\frac{F(a)}{a^q} +q\int_0^a\frac{F(s)}{s^{q+1}}\dd s.
 \end{align}
 In particular,
 \begin{align}\label{Lambda-Vanish}
  \lim_{a\to0^+}\Lambda_F(a)=0.
 \end{align}
\end{lemma}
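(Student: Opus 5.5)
First we prove \eqref{F-Little-o} using only the monotonicity of $F$. For $0<a\leqslant\delta_0$, every $s\in[a,2a]$ with $s\leqslant\delta_0$ satisfies $F(s)\geqslant F(a)$. Restrict to $0<a\leqslant\delta_0/2$. Then
\begin{align*}
 \int_a^{2a}\frac{F(s)}{s^{q+1}}\dd s\geqslant F(a)\int_a^{2a}\frac{\dd s}{s^{q+1}}=\frac{1-2^{-q}}{q}\,\frac{F(a)}{a^q}.
\end{align*}
By \eqref{Generic-Dini} and dominated convergence, the left-hand side tends to zero as $a\to0^+$; it is the tail of a convergent integral over a shrinking interval. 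This gives \eqref{F-Little-o}.

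Next we derive \eqref{Lambda-Identity} by Stieltjes integration by parts on $[\epsilon,a]$ with $0<\epsilon<a$. The function $s\mapsto s^{-q}$ is smooth on $[\epsilon,a]$ and $F$ is continuous and of bounded variation there. The integration-by-parts formula for Lebesgue--Stieltjes integrals, with no atoms since $F$ is continuous, then yields
\begin{align*}
 \int_{(\epsilon,a]}s^{-q}\dd F(s)=\frac{F(a)}{a^q}-\frac{F(\epsilon)}{\epsilon^q}+q\int_\epsilon^a\frac{F(s)}{s^{q+1}}\dd s.
\end{align*}
An alternative route is Fubini: write $s^{-q}=a^{-q}+q\int_s^a r^{-q-1}\dd r$ and swap the integrals, using $F(r)-F(\epsilon)=\dd F((\epsilon,r])$. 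We now let $\epsilon\to0^+$. The left-hand side increases to $\Lambda_F(a)$ by monotone convergence, since the integrand is nonnegative and $\dd F\geqslant0$. On the right, $F(\epsilon)\epsilon^{-q}\to0$ by \eqref{F-Little-o}, and the last integral converges to $q\int_0^a F(s)s^{-q-1}\dd s<\infty$ by \eqref{Generic-Dini}. This proves \eqref{Lambda-Identity}, and in particular $\Lambda_F(a)$ is finite.

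Finally, \eqref{Lambda-Vanish} follows directly from \eqref{Lambda-Identity}. The first term tends to zero by \eqref{F-Little-o}, and the second tends to zero by absolute continuity of the convergent integral \eqref{Generic-Dini}.

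The only delicate point is the limit $\epsilon\to0^+$. We cannot apply integration by parts on $(0,a]$ directly, because $s^{-q}$ is singular at the origin and a priori $\Lambda_F(a)$ might be infinite. This is exactly why \eqref{F-Little-o} is established first: it kills the boundary term at $\epsilon$, and monotone convergence then justifies the passage to the limit.
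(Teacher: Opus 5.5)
Your proposal is correct and follows the paper's proof step by step: the same lower bound on $\int_a^{2a}F(s)s^{-q-1}\dd s$ gives \eqref{F-Little-o}, and Stieltjes integration by parts on $(\epsilon,a]$ followed by $\epsilon\to0^+$ gives the identity, with the boundary term at $\epsilon$ removed by \eqref{F-Little-o}. Your monotone-convergence justification for the limit and the absolute-continuity argument for \eqref{Lambda-Vanish} spell out details the paper leaves implicit.
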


\begin{proof}
 Let $0<a<\frac{1}{2}\delta_0$. Since $F$ is nondecreasing, we estimate
 \begin{align*}
  \int_a^{2a}\frac{F(s)}{s^{q+1}}\dd s  \geqslant F(a)\int_a^{2a}s^{-q-1}\dd s  =\frac{1-2^{-q}}{q}\frac{F(a)}{a^q}.
 \end{align*}
 The left-hand side tends to zero as $a\to0^+$ by the absolute continuity of the integral in \eqref{Generic-Dini}. This proves \eqref{F-Little-o} immediately.

 For $0<\varepsilon<a$, Stieltjes integration by parts gives
 \begin{align*}
  \int_{(\varepsilon,a]}s^{-q}\dd F(s)  =\frac{F(a)}{a^q}-\frac{F(\varepsilon)}{\varepsilon^q}  +q\int_\varepsilon^a\frac{F(s)}{s^{q+1}}\dd s.
 \end{align*}
 Passing to the limit $\varepsilon\to0^+$ and using \eqref{F-Little-o}, we obtain \eqref{Lambda-Identity}. The limit in \eqref{Lambda-Vanish} follows.
\end{proof}

We will use the following Hardy-Stieltjes estimates in the nonlinear time convolutions.

\begin{lemma}[Hardy-Stieltjes estimate]\label{Lemma-Hardy}
 Let \eqref{Generic-Dini} hold and let $1\leqslant\ell<q$. Then
 \begin{align}\label{Hardy-Stieltjes}
  \int_0^a\rho^{\ell-q-1}\ml T_\ell[F](\rho)\dd\rho\leqslant\frac{1}{q-\ell}\Lambda_F(a)
 \end{align}
 for every $0<a\leqslant\delta_0$.
\end{lemma}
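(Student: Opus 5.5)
Unfold $\ml T_\ell[F](\rho)$ as a Stieltjes integral and apply Tonelli's theorem to swap the $\rho$-integral with the $\dd F(s)$-integral. Everything is nonnegative and $\dd F$ is a positive Borel measure on $(0,\delta_0]$, so the swap is legitimate even before we know the integrals are finite. Writing
\begin{align*}
 \int_0^a\rho^{\ell-q-1}\ml T_\ell[F](\rho)\dd\rho
 =\int_0^a\rho^{\ell-q-1}\int_{(0,\rho]}s^{-\ell}\dd F(s)\dd\rho
 =\int_{(0,a]}s^{-\ell}\left(\int_s^a\rho^{\ell-q-1}\dd\rho\right)\dd F(s),
\end{align*}
the region $\{0<s\leqslant\rho\leqslant a\}$ is described as $s\in(0,a]$, $\rho\in[s,a]$.

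Next, evaluate the inner integral explicitly. Since $\ell<q$, the exponent satisfies $\ell-q-1<-1$, so
\begin{align*}
 \int_s^a\rho^{\ell-q-1}\dd\rho=\frac{s^{\ell-q}-a^{\ell-q}}{q-\ell}\leqslant\frac{s^{\ell-q}}{q-\ell}.
\end{align*}
Substituting this back, the $s^{-\ell}$ factor combines with $s^{\ell-q}$ to give $s^{-q}$. The right-hand side is then bounded by $\frac{1}{q-\ell}\int_{(0,a]}s^{-q}\dd F(s)=\frac{1}{q-\ell}\Lambda_F(a)$, which is exactly \eqref{Hardy-Stieltjes}.

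The only point needing care is finiteness and measurability, so that the chain of (in)equalities is meaningful. By \eqref{Lambda-Identity} in the critical integration by parts lemma, $\Lambda_F(a)<\infty$ under \eqref{Generic-Dini}, so the final bound is finite. For $\rho\leqslant\delta_0$ we have $s^{-\ell}\leqslant s^{-q}\delta_0^{q-\ell}$, so $\ml T_\ell[F](\rho)$ is finite, nondecreasing and hence measurable. Tonelli then applies to the nonnegative product-measurable integrand $\rho^{\ell-q-1}s^{-\ell}\mathbf 1_{\{s\leqslant\rho\}}$ with respect to $\dd\rho\otimes\dd F$. I expect no real obstacle; the main thing is to keep the half-open interval $(0,\rho]$ consistent, although the diagonal has $\dd\rho$-measure zero anyway.
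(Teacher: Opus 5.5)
Your proposal is correct and follows essentially the same route as the paper: a Tonelli swap, explicit evaluation of $\int_s^a\rho^{\ell-q-1}\dd\rho$, and dropping the nonpositive $-a^{\ell-q}$ contribution to reach $\frac{1}{q-\ell}\Lambda_F(a)$. Your extra finiteness check $\ml T_\ell[F](\rho)\leqslant\delta_0^{q-\ell}\Lambda_F(\rho)$ is a harmless refinement. Bounding the inner integral before integrating against $\dd F$ also sidesteps the (finite, hence legitimate) splitting of $\int_{(0,a]}\bigl(s^{-q}-a^{\ell-q}s^{-\ell}\bigr)\dd F(s)$ that the paper writes.
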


\begin{proof}
 Since all integrands are nonnegative, Tonelli's theorem gives
 \begin{align*}
  \int_0^a\rho^{\ell-q-1}\ml T_\ell[F](\rho)\dd\rho
  &=\int_{(0,a]}s^{-\ell}\left(\int_s^a\rho^{\ell-q-1}\dd\rho\right)\mathrm{d} F(s)\\
  &=\frac{1}{q-\ell}\int_{(0,a]}\left(s^{-q}-a^{\ell-q}s^{-\ell}\right)\mathrm{d} F(s)\\
  &\leqslant\frac{1}{q-\ell}\Lambda_F(a)
 \end{align*}
 to complete our proof.
\end{proof}

\subsection{Amplitude-layer inequalities}

The nonlinear term can be estimated directly as a whole through its amplitude levels (integral) instead of the maximum in \eqref{Freezing-Step}.

\begin{lemma}[Amplitude-layer estimates]\label{Lemma-Layer}
 Let $1\leqslant m<\infty$, $1\leqslant r\leqslant\infty$, and let $v\in L^m\cap L^r\cap L^\infty$ satisfy $\|v\|_{L^\infty}\leqslant\delta_0$. Then
 \begin{align}
  \|F(|v|)\|_{L^1}&\leqslant\|v\|_{L^m}^m\ml T_m[F]\bigl(\|v\|_{L^\infty}\bigr),\label{Layer-L1}\\
  \|F(|v|)\|_{L^r}&\leqslant\|v\|_{L^r}\ml T_1[F]\bigl(\|v\|_{L^\infty}\bigr).\label{Layer-Lr}
 \end{align}
\end{lemma}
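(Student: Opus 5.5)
The plan is to write $F(|v(x)|)$ as a Stieltjes integral over amplitude levels and then bound each level pointwise. Since $F(0)=0$ and $F$ is nondecreasing and continuous, for each $x$ with $0<|v(x)|\leqslant\delta_0$ we have
\begin{align*}
 F(|v(x)|)=\int_{(0,|v(x)|]}\dd F(s)=\int_{(0,\|v\|_{L^\infty}]}\mathbf 1_{\{s\leqslant|v(x)|\}}\dd F(s),
\end{align*}
and this identity is trivially true where $v(x)=0$. Set $A:=\|v\|_{L^\infty}$. We may assume $A>0$ (otherwise both sides vanish) and that $|v|\leqslant A$ everywhere after modifying $v$ on a null set.

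For \eqref{Layer-L1}, we integrate this identity in $x$ and apply Tonelli's theorem, which is allowed because everything is nonnegative. This gives
\begin{align*}
\|F(|v|)\|_{L^1}=\int_{(0,A]}\bigl|\{x:|v(x)|\geqslant s\}\bigr|\dd F(s).
\end{align*}
Chebyshev's inequality gives $|\{|v|\geqslant s\}|\leqslant s^{-m}\|v\|_{L^m}^m$, and substituting this yields $\|v\|_{L^m}^m\ml T_m[F](A)$ directly.

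For \eqref{Layer-Lr}, the cleanest route is a pointwise bound. For $s\leqslant|v(x)|$ we have $1\leqslant |v(x)|/s$, so
\begin{align*}
F(|v(x)|)\leqslant\int_{(0,A]}\mathbf 1_{\{s\leqslant|v(x)|\}}\frac{|v(x)|}{s}\dd F(s)\leqslant |v(x)|\,\ml T_1[F](A).
\end{align*}
Taking the $L^r$ norm of both sides, which also covers $r=\infty$, gives \eqref{Layer-Lr}. One could instead use Minkowski's integral inequality on the level-set representation, but the pointwise bound avoids that.

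The only delicate point is finiteness. If $\ml T_1[F](A)$ or $\ml T_m[F](A)$ is infinite, the inequalities hold trivially. Under \eqref{Generic-Dini} with $q>m$, both quantities are in fact finite, since $s^{-\ell}\leqslant A^{q-\ell}s^{-q}$ for $s\leqslant A$. Beyond this, I expect no real obstacle; the proof only needs measurability of the level sets and Tonelli's theorem.
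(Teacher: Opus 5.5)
Your proposal is correct and follows the paper's proof essentially verbatim: the layer-cake representation with Tonelli's theorem and Chebyshev's inequality for \eqref{Layer-L1}, and the pointwise bound $F(y)\leqslant y\,\ml T_1[F](\|v\|_{L^\infty})$ applied with $y=|v(x)|$ for \eqref{Layer-Lr}. Your extra remarks, namely modifying $v$ on a null set so that $|v|\leqslant\|v\|_{L^\infty}$ everywhere and checking that $\ml T_m[F]$ and $\ml T_1[F]$ are finite, are harmless refinements that the paper leaves implicit.
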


\begin{proof}
The conclusion is immediate if $\|v\|_{L^\infty}=0$. Let $\|v\|_{L^\infty}>0$. By the layer-cake representation and Tonelli's theorem,
 \begin{align*}
  \|F(|v|)\|_{L^1}=\int_{\mb{R}^n}\int_{(0,|v(x)|]}\dd F(s)\dd x=\int_{(0,\|v\|_{L^\infty}]}\bigl|\{x:|v(x)|\geqslant s\}\bigr|\dd F(s).
 \end{align*}
 Chebyshev's inequality yields
 \begin{align*}
  \bigl|\{x:|v(x)|\geqslant s\}\bigr|  \leqslant s^{-m}\|v\|_{L^m}^m.
 \end{align*}
 Substitution into the preceding identity proves \eqref{Layer-L1}.

 For $0\leqslant y\leqslant \|v\|_{L^\infty}$, we estimate
 \begin{align*}
  F(y)=\int_{(0,y]}\dd F(s)  \leqslant y\int_{(0,y]}s^{-1}\dd F(s)  \leqslant y\ml T_1[F](\|v\|_{L^\infty}).
 \end{align*}
 Applying this pointwise with $y=|v(x)|$ proves \eqref{Layer-Lr}.
\end{proof}

\section{Proof of Theorem~\ref{Thm-Main}}

For brevity, we set $q:=p_{\mathrm F}(n)=1+\frac{1}{\beta}$ with $\beta:=\frac{n}{2}$. Under our assumption in Theorem~\ref{Thm-Main}, we claim $1<m<q$.
We also set $\delta_m:=\beta\left(1-\frac{1}{m}\right)$.

\subsection{Local theory and auxiliary estimates for the linear model}

\begin{lemma}[Local Sobolev theory and continuation]\label{Lemma-Local}
 Let $n\in\{1,2\}$ and let $F$ satisfy Assumption~\ref{Assum-Chen}. For every $(u_0,u_1)\in H^{N_n}\times H^{N_n-1}$, there exists a unique maximal mild Sobolev solution
 \begin{align*}
  u\in\ml C\bigl([0,T_{\max}),H^{N_n}\bigr)\cap\ml C^1\bigl([0,T_{\max}),H^{N_n-1}\bigr).
 \end{align*}
 Moreover,
 \begin{align}\label{Continuation-Criterion}
  \sup_{0\leqslant t<T_{\max}}\|u(t,\cdot)\|_{L^\infty}<\infty\ \ \Rightarrow\ \ T_{\max}=\infty.
 \end{align}
\end{lemma}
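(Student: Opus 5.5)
We will run the standard contraction argument for the mild formulation in the energy space, using that $H^{N_n}$ is a Banach algebra embedded in $L^\infty$ for $n\in\{1,2\}$, since $N_n>\frac n2$. Let $\ml F_e(z)=F(|z|)$, so that the nonlinearity is the superposition operator $u\mapsto\ml F_e(u)$. By Assumption~\ref{Assum-Chen}, $\ml F_e\in\ml C^{N_n}(\mb R)$ and $\ml F_e(0)=0$. Because $F$ is nondecreasing with $F(0)=0$ and $\ml F_e$ is $\ml C^1$ and even, we also have $\ml F_e'(0)=0$.

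The first step is a local Lipschitz estimate for the superposition operator: for $u,v$ in a ball of radius $R$ in $H^{N_n}$,
\begin{align*}
 \|\ml F_e(u)\|_{H^{N_n-1}}\leqslant C(R)\|u\|_{H^{N_n-1}},\qquad \|\ml F_e(u)-\ml F_e(v)\|_{L^2}\leqslant C(R)\|u-v\|_{L^2}.
\end{align*}
The second bound follows from the mean value theorem and $\|u\|_{L^\infty},\|v\|_{L^\infty}\lesssim R$. For $n=1$ we have $N_n-1=0$, so nothing more is needed. For $n=2$ we have $N_n=2$, and we need $\nabla\ml F_e(u)=\ml F_e'(u)\nabla u$ in $L^2$; this is bounded by $\sup_{|z|\lesssim R}|\ml F_e'(z)|\,\|\nabla u\|_{L^2}$.

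Next we use the energy estimates for the damped wave propagators on $H^{N_n}\times H^{N_n-1}$, namely
\begin{align*}
 \|K_1(t)g\|_{H^{N_n}}\lesssim\|g\|_{H^{N_n-1}},
\end{align*}
uniformly on bounded time intervals. With these, the Duhamel map $\Phi[u](t)=K_0(t)u_0+K_1(t)u_1+\int_0^tK_1(t-s)\ml F_e(u(s))\dd s$ sends a ball of $X_T=\ml C([0,T],H^{N_n})\cap\ml C^1([0,T],H^{N_n-1})$ into itself for small $T=T(\|(u_0,u_1)\|)$.

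The main obstacle is the contraction estimate in the top norm, because $\ml F_e'$ is only continuous, so $\ml F_e'(u)\nabla u-\ml F_e'(v)\nabla v$ is not Lipschitz in $H^1$. We plan to avoid this by contracting in the weaker metric $d(u,v)=\sup_t\|(u-v)(t)\|_{H^{N_n-1}}+\sup_t\|(u-v)_t(t)\|_{H^{N_n-2}}$ (for $n=1$, simply the $L^2$-type energy). The $H^{N_n}$-ball is complete under this metric by weak-$*$ lower semicontinuity, and contraction in $d$ needs only the $L^2$ Lipschitz bound. Continuity in time with values in $H^{N_n}$ is then recovered from the Duhamel formula together with strong continuity of $t\mapsto\ml F_e(u(t))$ in $H^{N_n-1}$; this last property follows from dominated convergence and the continuity of $\ml F_e'$. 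Uniqueness follows from the same weak-norm Gronwall estimate, and gluing local solutions gives the maximal solution.

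For the continuation criterion \eqref{Continuation-Criterion}, suppose $M:=\sup_{t<T_{\max}}\|u(t)\|_{L^\infty}<\infty$ and $T_{\max}<\infty$. The superposition bounds above then hold with constants depending only on $M$, because they use only $\sup_{|z|\leqslant M}|\ml F_e^{(k)}(z)|$ for $k\leqslant N_n-1$, which enters linearly in $\|u\|_{H^{N_n-1}}$. The Duhamel formula and Gronwall's inequality then give
\begin{align*}
 \|u(t)\|_{H^{N_n}}+\|u_t(t)\|_{H^{N_n-1}}\leqslant C(M,T_{\max},u_0,u_1),
\end{align*}
bounded up to $T_{\max}$. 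Since the local existence time depends only on the size of the data, we can restart near $T_{\max}$ and contradict maximality.
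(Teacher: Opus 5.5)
Your proposal is correct, but the local existence step takes a different route from the paper. The reason you give for that detour misreads Assumption~\ref{Assum-Chen}. For $n=2$ one has $N_n=2$, so $\mathcal F_e\in\mathcal C^2(\mathbb R)$ and $\mathcal F_e'$ is locally Lipschitz, not merely continuous. The paper therefore contracts directly in the energy space. It writes $\nabla(\mathcal F_e(u)-\mathcal F_e(v))=\mathcal F_e'(u)\nabla(u-v)+(\mathcal F_e'(u)-\mathcal F_e'(v))\nabla v$ and bounds the second term by $\sup|\mathcal F_e''|\,\|u-v\|_{L^\infty}\|\nabla v\|_{L^2}$, using $H^2\hookrightarrow L^\infty$.

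Your Kato-type scheme (bounded in $H^{N_n}$, contractive in the weaker metric $d$, strong continuity in $H^{N_n}$ recovered afterwards) is still valid. In fact it needs only $\mathcal F_e\in\mathcal C^1$, so it shows the second derivative is not needed for this lemma. The price is extra bookkeeping:
\begin{itemize}
\item you should work in the ball of $L^\infty([0,T],H^{N_n})$ with $u_t\in L^\infty([0,T],H^{N_n-1})$ rather than in the ball of $X_T$, since the latter is not complete for $d$;
\item the upgrade to $\mathcal C([0,T],H^{N_n})$ then has to be argued;
\item you get Lipschitz dependence on the data only in the weak norm, whereas the paper gets it in $H^{N_n}\times H^{N_n-1}$ at once.
\end{itemize}

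Your continuation argument is essentially the paper's. Both use the linear bound $\|\mathcal F_e(u)\|_{H^{N_n-1}}\lesssim_R\|u\|_{H^{N_n-1}}$ coming from $\mathcal F_e(0)=0$, Gronwall on a finite interval, and a restart near $T_{\max}$. You run Gronwall through the Duhamel formula with $\|K_1(t)g\|_{H^{N_n}}\lesssim\|g\|_{H^{N_n-1}}$ on bounded time intervals. The paper instead uses energy estimates for $u$ and its spatial derivatives plus $\|u(t)\|_{L^2}\leqslant\|u_0\|_{L^2}+\int_0^t\|u_s\|_{L^2}\,\mathrm{d}s$, which amounts to the same thing.
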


\begin{proof}
 Since $H^{N_n}\hookrightarrow L^\infty$, the Nemytskii mapping $u\mapsto\ml F_e(u)$ is locally Lipschitz from $H^{N_n}$ to $H^{N_n-1}$. Indeed, this follows directly from $\ml F_e\in\ml C^1$ when $n=1$. If $n=2$, then $N_n=2$ and, for $u,v$ in a bounded subset of $H^2$, one derives
 \begin{align*}
  \nabla\bigl(\ml F_e(u)-\ml F_e(v)\bigr)=\ml F_e'(u)\nabla(u-v)+\bigl(\ml F_e'(u)-\ml F_e'(v)\bigr)\nabla v.
 \end{align*}
 The $H^2$ embedding into $L^\infty$ and the boundedness of $\ml F_e'$ and $\ml F_e''$ on compact intervals give the required $H^1$ difference estimate. The standard fixed-point argument for the linear damped wave equation therefore yields a unique local solution and a maximal existence time $T_{\max}$.

 It remains to verify the continuation statement. Assume that
 \begin{align*}
  \sup_{0\leqslant t<T_{\max}}\|u(t,\cdot)\|_{L^\infty}\leqslant R.
 \end{align*}
 Since $\ml F_e(0)=0$, the usual composition estimate gives
 \begin{align*}
  \bigl\|\ml F_e\bigl(u(t,\cdot)\bigr)\bigr\|_{H^{N_n-1}}\lesssim_R\|u(t,\cdot)\|_{H^{N_n-1}}.
 \end{align*}
 Applying the energy estimate to the equation and to its spatial derivatives of order at most $N_n-1$, we control $u_t(t,\cdot)$ in $H^{N_n-1}$ and $\nabla u(t,\cdot)$ in $H^{N_n-1}$ on every finite time interval. The remaining $L^2$ component of $u$ follows from
 \begin{align*}
  \|u(t,\cdot)\|_{L^2}\leqslant\|u_0\|_{L^2}+\int_0^t\|u_s(s,\cdot)\|_{L^2}\dd s.
 \end{align*}
If $T_{\max}<\infty$, Gronwall's inequality therefore yields
\begin{align*}
\sup_{0\leqslant t<T_{\max}}\|(u,u_t)(t,\cdot)\|_{H^{N_n}\times H^{N_n-1}}\lesssim_{R,T_{\max}}\|(u_0,u_1)\|_{H^{N_n}\times H^{N_n-1}}.
\end{align*}
Hence, the local existence time for the problem issued from any $t_0<T_{\max}$ can be chosen uniformly for $t_0$ sufficiently close to $T_{\max}$. Taking such $t_0$, the corresponding local solution extends the maximal solution beyond $T_{\max}$, which contradicts its maximality. Thus, $T_{\max}=\infty$, and \eqref{Continuation-Criterion} follows.
\end{proof}

We next collect the well-known estimates for the velocity propagator from \cite{Matsumura=1976,Marcati-Nishihara=2003,Hosono-Ogawa=2004,Narazaki=2004,Ikeda-Inui-Okamoto-Wakasugi=2019,DAbbicco-Ebert=2025} that will be used below. For $n=1$, we take $m=r=2$. For $n=2$, we fix $1<m<2<r<\infty$.

\begin{lemma}[Estimates for the velocity propagator]
 Let $n\in\{1,2\}$ and $j\in\{0,1\}$. There exists $c>0$ such that
 \begin{align}
  \|K_1(t)f(\cdot)\|_{L^m}&\lesssim\langle t\rangle^{-\delta_m}\|f\|_{L^1},\label{Linear-L1-Lm}\\
  \|\nabla^jK_1(t)f(\cdot)\|_{L^2}&\lesssim\langle t\rangle^{-\frac{n}{4}-\frac{j}{2}}\|f\|_{L^1}+\mathrm e^{-ct}\|f\|_{L^2},\label{Linear-Energy-Mixed}\\
  \|\nabla^jK_1(t)f(\cdot)\|_{L^2}&\lesssim\langle t\rangle^{-\frac{j}{2}}\|f\|_{L^2},\label{Linear-L2-L2}\\
  \|K_1(t)f(\cdot)\|_{L^\infty}&\lesssim\langle t\rangle^{-\beta}\|f\|_{L^1}+\mathrm e^{-ct}\|f\|_{L^r},\label{Linear-Linfty-Mixed}\\
  \|K_1(t)f(\cdot)\|_{L^\infty}&\lesssim\langle t\rangle^{-\frac{\beta}{r}}\|f\|_{L^r},\label{Linear-Lr-Linfty}
 \end{align}
 for every $t\geqslant0$.
\end{lemma}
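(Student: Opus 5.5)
The plan is to prove all five estimates by the standard Fourier-multiplier analysis of the velocity propagator. Its symbol is
\begin{align*}
\widehat{K_1}(t,\xi)=\mathrm e^{-\frac t2}\,\frac{\sinh\bigl(t\sqrt{1/4-|\xi|^2}\bigr)}{\sqrt{1/4-|\xi|^2}},
\end{align*}
understood as $\mathrm e^{-\frac t2}\sin\bigl(t\sqrt{|\xi|^2-1/4}\bigr)/\sqrt{|\xi|^2-1/4}$ for $|\xi|>\frac12$. I will fix smooth cut-offs $\chi_L+\chi_M+\chi_H=1$ localizing to $|\xi|\leqslant\frac18$, $\frac18\leqslant|\xi|\leqslant 2$ and $|\xi|\geqslant1$. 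This splits $K_1(t)=K_1^L(t)+K_1^M(t)+K_1^H(t)$, and I estimate each piece separately.

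\textbf{Middle zone.} The symbol $\chi_M\widehat{K_1}$ is smooth and compactly supported. Each of its $\xi$-derivatives is bounded by $C\langle t\rangle^{k}\mathrm e^{-ct}$ for some $c>0$, because $\frac12-\mathrm{Re}\sqrt{1/4-|\xi|^2}$ is bounded away from $0$ on the support. Hence its kernel is in $L^1\cap L^\infty$ with norm $\lesssim \mathrm e^{-ct}$. By Young's inequality, $K_1^M(t)$ maps $L^1$ and $L^r$ into every $L^p$ with an $\mathrm e^{-ct}$ bound. This contribution is harmless in \eqref{Linear-L1-Lm}--\eqref{Linear-Lr-Linfty}.

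\textbf{Low zone.} Write $\lambda=\sqrt{1/4-|\xi|^2}$, so that
\begin{align*}
\chi_L\widehat{K_1}=\chi_L\,\frac{\mathrm e^{-t(\frac12-\lambda)}-\mathrm e^{-t(\frac12+\lambda)}}{2\lambda}.
\end{align*}
The second term decays like $\mathrm e^{-t/2}$ and is handled as in the middle zone. For the first term, $\frac12-\lambda=|\xi|^2+\mathcal O(|\xi|^4)$, so it equals $\mathrm e^{-t|\xi|^2}a(t,\xi)$. Here $a$ satisfies symbol estimates $|\partial_\xi^\alpha a|\lesssim |\xi|^{-|\alpha|}$ uniformly in $t$; the correction $\mathrm e^{-t\mathcal O(|\xi|^4)}$ produces only factors $(t|\xi|^2)^k|\xi|^{\ldots}$, which are absorbed by the Gaussian.
\begin{itemize}
\item Kernel bounds. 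By scaling, the kernel $G_t$ of $\chi_L\mathrm e^{-t|\xi|^2}a$ satisfies the heat-type bound $|\nabla^jG_t(x)|\lesssim\langle t\rangle^{-\frac n2-\frac j2}(1+|x|/\sqrt{\langle t\rangle})^{-N}$. I prove this by integrating by parts in $\xi$ after rescaling $\xi\mapsto\xi/\sqrt t$.
\item Consequences. This gives $\|\nabla^jG_t\|_{L^p}\lesssim\langle t\rangle^{-\frac n2(1-\frac1p)-\frac j2}$ for all $p\in[1,\infty]$. Young's inequality then yields \eqref{Linear-L1-Lm} (with $p=m$), the $L^1$ part of \eqref{Linear-Energy-Mixed} (with $p=2$), and the $L^1$ part of \eqref{Linear-Linfty-Mixed} (with $p=\infty$). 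Taking $p=r'$ gives \eqref{Linear-Lr-Linfty} with decay $\langle t\rangle^{-\frac n2\cdot\frac1r}=\langle t\rangle^{-\beta/r}$.
\item Estimate \eqref{Linear-L2-L2}. This is Plancherel with $\sup_\xi|\xi|^j\mathrm e^{-ct|\xi|^2}\lesssim\langle t\rangle^{-j/2}$.
\end{itemize}
The case $1<m<2$ in two dimensions is exactly why pointwise kernel bounds, rather than Hausdorff--Young, are needed here.

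\textbf{High zone.} Here $|\widehat{K_1}|\lesssim\mathrm e^{-t/2}\langle\xi\rangle^{-1}$, which gives \eqref{Linear-L2-L2} for $j\in\{0,1\}$ and the $\mathrm e^{-ct}\|f\|_{L^2}$ term in \eqref{Linear-Energy-Mixed}. The main obstacle is the $L^r\to L^\infty$ bound with loss $\mathrm e^{-ct}\|f\|_{L^r}$ needed in \eqref{Linear-Linfty-Mixed} and \eqref{Linear-Lr-Linfty}, since only one derivative is gained.
\begin{itemize}
\item First step: expand $\sqrt{|\xi|^2-1/4}=|\xi|-\frac1{8|\xi|}+\dots$. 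This writes $\chi_H\widehat{K_1}$ as $\mathrm e^{-t/2}$ times $\chi_H\sin(t|\xi|)/|\xi|$ plus oscillatory remainders of order $\langle t\rangle\langle\xi\rangle^{-2}$.
\item Remainders. When $n\leqslant 2$, a symbol $\chi_H\langle\xi\rangle^{-2}\mathrm e^{\pm it|\xi|}$ has an $L^1$ kernel up to polynomial growth in $t$ (Bessel-potential argument). Its $L^r\to L^\infty$ bound comes from Sobolev embedding $H^{2-\epsilon,r}\hookrightarrow L^\infty$ combined with the $L^r$ boundedness of $\mathrm e^{it|D|}\langle D\rangle^{-\epsilon'}$-type operators; the polynomial growth is absorbed by $\mathrm e^{-t/2}$.
\item Main term. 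I use the explicit wave kernel $\frac12\mathbf 1_{\{|x|<t\}}$ if $n=1$, and $\frac{1}{2\pi}(t^2-|x|^2)_+^{-1/2}$ if $n=2$. It lies in $L^{r'}$ with norm $\lesssim\langle t\rangle^{C}$ precisely when $r'<2$, that is, $r>2$; this is why $r>2$ is fixed for $n=2$. The cut-off $\chi_H$ is removed by subtracting a low-frequency smooth piece.
\end{itemize}
Combining the three zones yields \eqref{Linear-L1-Lm}--\eqref{Linear-Lr-Linfty}. Alternatively, I would cite the detailed forms in \cite{Narazaki=2004,Hosono-Ogawa=2004,Ikeda-Inui-Okamoto-Wakasugi=2019}.
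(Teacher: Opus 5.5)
The paper gives no proof of this lemma. It is quoted as a collection of known estimates from Matsumura, Marcati--Nishihara, Hosono--Ogawa, Narazaki, Ikeda--Inui--Okamoto--Wakasugi and D'Abbicco--Ebert. Your low/middle/high Fourier-multiplier argument is therefore a genuinely different, self-contained route. The low and middle zones are sound. You also correctly identify why $m<2<r$ is forced for $n=2$: Hausdorff--Young is unavailable below $L^2$, and the free wave kernel lies in $L^p(\mb R^2)$ only for $p<2$.

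\textbf{The missing step.} You never treat the high-frequency part of \eqref{Linear-L1-Lm}, yet your conclusion says the three zones yield all five estimates. That estimate has no exponentially small $L^2$ or $L^r$ term on its right-hand side, so $K_1^H(t)$ must be bounded from $L^1$ to $L^m$ directly.
\begin{itemize}
\item For $n=1$ ($m=2$) this is immediate by Plancherel, since $\langle\xi\rangle^{-1}\in L^2(\mb R)$.
\item For $n=2$ it is not. Here $\langle\xi\rangle^{-1}\notin L^2(\mb R^2)$, and in fact $K_1(t)$ does not even map $L^1$ into $L^2$, because its kernel behaves like $(t^2-|x|^2)^{-1/2}$ near the light cone.
\end{itemize}
What you need is that the kernel of $K_1^H(t)$ lies in $L^m(\mb R^2)$ with norm $\lesssim\mathrm e^{-ct}$. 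By duality this is exactly your high-zone $L^{m'}\to L^\infty$ bound with $m'>2$. So the main-term/remainder analysis has to be stated for every exponent in $(2,\infty)$, not only for the fixed $r$, and then applied with $r=m'$.

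\textbf{Two smaller repairs.}
\begin{itemize}
\item In the remainder step, $\mathrm e^{it|D|}\langle D\rangle^{-s}$ is bounded on $L^r(\mb R^2)$ only for $s\geqslant|\frac12-\frac1r|$ (Peral, Miyachi, Seeger--Sogge--Stein), not for arbitrarily small $s$. There is room, however: take $s=\frac12$ and use $\langle D\rangle^{-3/2}:L^r\to L^\infty$, which holds because $\frac32>\frac2r$.
\item The cut-offs $\chi_L$ and $\chi_M$ must have overlapping supports to form a smooth partition of unity.
\end{itemize}

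\textbf{Comparison with the cited route.} For $n\in\{1,2\}$, Marcati--Nishihara and Hosono--Ogawa exploit the explicit kernels
\[
K_1(t,x)=\tfrac12\mathrm e^{-t/2}I_0\bigl(\tfrac12\sqrt{t^2-x^2}\bigr)\mathbf 1_{\{|x|<t\}},\qquad
K_1(t,x)=\frac{\mathrm e^{-t/2}}{2\pi}\,\frac{\cosh\bigl(\frac12\sqrt{t^2-|x|^2}\bigr)}{\sqrt{t^2-|x|^2}}\,\mathbf 1_{\{|x|<t\}}.
\]
Let $W(t)$ denote the free wave kernel. Then $K_1(t)-\mathrm e^{-t/2}W(t)$ is a bounded, nonnegative kernel with heat-type $L^p$ norms. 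The whole singular part is therefore exactly $\mathrm e^{-t/2}W(t)$. Your ``main obstacle'' becomes a one-line computation: no phase expansion, no remainder symbols, and no $L^r$ theory for the wave propagator.

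Your route needs heavier harmonic analysis but does not depend on closed-form kernels. That makes it closer to the Fourier-multiplier $L^p$--$L^q$ theory, for instance that of Ikeda--Inui--Okamoto--Wakasugi.
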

\begin{remark}
Concerning $n=2$, the local solution also belongs to $\mathcal C\big([0,T],L^m\big)$ for every $T<T_{\max}$ and $1<m<2$. Actually, the initial data belong to $L^m$ by interpolation, while $\ml F_{e}\in\mathcal C^2(\mathbb R)$, $\ml F_{e}(0)=\ml F_{e}'(0)=0$, and $H^2\hookrightarrow L^2\cap L^\infty$ imply $F(|u|)\in L^1$ on compact time intervals. The conclusion follows from the $L^1-L^m$ estimate for $K_1(t)$.
\end{remark}

\begin{lemma}[Convolution with a decreasing integrable envelope]\label{Lemma-Convolution}
 Let $0\leqslant\alpha<1$, and let $g:[0,\infty)\to[0,\infty)$ be nonincreasing and integrable. Then
 \begin{align}\label{Convolution-Envelope}
  \int_0^t\langle t-s\rangle^{-\alpha}g(s)\dd s \lesssim\langle t\rangle^{-\alpha}\|g\|_{L^1(0,\infty)}
 \end{align}
 for every $t\geqslant0$.
\end{lemma}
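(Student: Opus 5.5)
The plan is to split the time integral at $s=t/2$ and treat the two halves separately, using the monotonicity of $g$ on the first half and the boundedness of $\langle t-s\rangle^{-\alpha}$ together with integrability on the second.

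We may assume $t\geqslant2$. For $0\leqslant t\leqslant2$ we have $\langle t-s\rangle^{-\alpha}\leqslant1$ and $\langle t\rangle^{-\alpha}\geqslant3^{-\alpha}$, so the left-hand side of \eqref{Convolution-Envelope} is at most $\|g\|_{L^1(0,\infty)}\lesssim\langle t\rangle^{-\alpha}\|g\|_{L^1(0,\infty)}$.

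For $t\geqslant2$, write the integral as $I_1+I_2$, where $I_1$ runs over $[0,t/2]$ and $I_2$ over $[t/2,t]$.

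On $[0,t/2]$ we have $t-s\geqslant t/2$, hence $\langle t-s\rangle\geqslant\frac12\langle t\rangle$. This gives
\begin{align*}
 I_1\leqslant 2^{\alpha}\langle t\rangle^{-\alpha}\int_0^{t/2}g(s)\dd s\leqslant 2^{\alpha}\langle t\rangle^{-\alpha}\|g\|_{L^1(0,\infty)}.
\end{align*}

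On $[t/2,t]$ we use that $g$ is nonincreasing: $g(s)\leqslant g(t/2)$ there. Since $g$ is nonnegative and nonincreasing,
\begin{align*}
 \frac{t}{2}\,g(t/2)\leqslant\int_0^{t/2}g(\tau)\dd\tau\leqslant\|g\|_{L^1(0,\infty)}.
\end{align*}
Because $\alpha<1$, we also have
\begin{align*}
 \int_{t/2}^t\langle t-s\rangle^{-\alpha}\dd s=\int_0^{t/2}\langle\sigma\rangle^{-\alpha}\dd\sigma\leqslant\frac{\langle t/2\rangle^{1-\alpha}}{1-\alpha}.
\end{align*}
Combining these, and using $t\approx\langle t\rangle\approx\langle t/2\rangle$ for $t\geqslant2$,
\begin{align*}
 I_2\leqslant g(t/2)\,\frac{\langle t/2\rangle^{1-\alpha}}{1-\alpha}\lesssim\frac{\|g\|_{L^1(0,\infty)}}{t}\,\langle t\rangle^{1-\alpha}\lesssim\langle t\rangle^{-\alpha}\|g\|_{L^1(0,\infty)}.
\end{align*}

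The only step needing care is $I_2$. Integrability of $g$ alone gives no decay of $g$ near $s\approx t$, so the monotonicity is essential: it converts the $L^1$ bound into the pointwise bound $g(t/2)\lesssim t^{-1}\|g\|_{L^1}$. The restriction $\alpha<1$ is needed to keep the kernel integral of order $\langle t\rangle^{1-\alpha}$. Adding the bounds for $I_1$ and $I_2$ yields \eqref{Convolution-Envelope}.
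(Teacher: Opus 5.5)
Your proof is correct and follows essentially the same route as the paper: you split at $t/2$, use $\langle t-s\rangle\approx\langle t\rangle$ on $[0,t/2]$, and on $[t/2,t]$ use monotonicity to get $g(t/2)\lesssim t^{-1}\|g\|_{L^1(0,\infty)}$ together with $\int_0^{t/2}\langle\tau\rangle^{-\alpha}\dd\tau\lesssim\langle t\rangle^{1-\alpha}$. The only addition is that you explicitly handle the case $0\leqslant t\leqslant2$, which the paper simply asserts is sufficient to skip.
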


\begin{proof}
 It is sufficient to consider $t\geqslant2$. On $[0,\frac{t}{2}]$, one has $\langle t-s\rangle\approx\langle t\rangle$ so that
 \begin{align*}
  \int_0^{\frac{t}{2}}\langle t-s\rangle^{-\alpha}g(s)\dd s \lesssim\langle t\rangle^{-\alpha}\|g\|_{L^1(0,\infty)}.
 \end{align*}
 Since $g$ is nonincreasing, we obtain
 \begin{align*}
  \int_{\frac{t}{2}}^t\langle t-s\rangle^{-\alpha}g(s)\dd s\leqslant g\left(\frac{t}{2}\right)  \int_0^{\frac{t}{2}}\langle\tau\rangle^{-\alpha}\dd\tau\lesssim t^{-1}\langle t\rangle^{1-\alpha}  \|g\|_{L^1(0,\infty)}.
 \end{align*}
 This proves \eqref{Convolution-Envelope}.
\end{proof}

Let $u$ be the maximal solution given by Lemma~\ref{Lemma-Local}. For $T\in(0,T_{\max})$, we define
\begin{align}\label{X-Norm}
 \|u\|_{X(T)} :=\sup_{0\leqslant t\leqslant T}\bigg( \langle t\rangle^{\delta_m}\|u(t,\cdot)\|_{L^m} +\sum_{j=0}^1\langle t\rangle^{\frac{n}{4}+\frac{j}{2}} \|\nabla^j u(t,\cdot)\|_{L^2}+\langle t\rangle^\beta\|u(t,\cdot)\|_{L^\infty} \bigg).
\end{align}
Denoting the linear part
\begin{align*}
	u^{\lin}(t,x):=K_0(t)u_0(x)+K_1(t)u_1(x),
\end{align*}
the estimates in \cite{Matsumura=1976,Marcati-Nishihara=2003,Narazaki=2004,Hosono-Ogawa=2004,Ikeda-Inui-Okamoto-Wakasugi=2019} and Sobolev embedding imply $\|u^{\lin}\|_{X(\infty)}\lesssim\|(u_0,u_1)\|_{\ml A_n}$.
We write
\begin{align}\label{M-Definition}
 A(t):=M\langle t\rangle^{-\beta} \ \ \mbox{with}\ \   M:=\|u\|_{X(T)}.
\end{align}
In the bootstrap argument below, $M$ is sufficiently small so that $A(t)\leqslant\delta_0$.

\subsection{Nonlinear source estimates}

The first source envelope is defined by
\begin{align}\label{g-Definition}
 g(t):=M^m\langle t\rangle^{-\beta(m-1)} \ml T_m[F]\big(A(t)\big).
\end{align}
By \eqref{X-Norm}, \eqref{M-Definition}, and Lemma~\ref{Lemma-Layer}, one now claims
\begin{align}\label{Source-L1-Pointwise}
 \big\|F\big(|u(t,\cdot)|\big)\big\|_{L^1}\leqslant g(t).
\end{align}
Both factors on the right-hand side of \eqref{g-Definition} are nonincreasing in $t$, and hence $g$ is nonincreasing.

According to the change of variables, we arrive at
\begin{align}\label{Amplitude-Change}
 a\equiv A(t)=M\langle t\rangle^{-\beta}\ \ \Rightarrow \ \ \dd t =-\frac{1}{\beta}M^{q-1}a^{-q}\dd a.
\end{align}
Therefore, by $1<m<q$ and \eqref{Hardy-Stieltjes} with $\ell=m$, we estimate
\begin{align}\label{Source-L1-Integral}
 \int_0^\infty g(t)\dd t=\frac{M^q}{\beta}\int_0^M a^{m-q-1}\ml T_m[F](a)\dd a\lesssim M^q\Lambda_F(M).
\end{align}

To estimate the source term in $L^2$ and $L^r$, let us define
\begin{align*}
 h(t):=\ml T_1[F]\big(A(t)\big),
\end{align*}
which is nonincreasing. From Lemma~\ref{Lemma-Layer}, we obtain
\begin{align*}
\big\|F\big(|u(t,\cdot)|\big)\big\|_{L^\ell}\leqslant h(t)\|u(t,\cdot)\|_{L^\ell}
\end{align*}
for every $1\leqslant\ell\leqslant\infty$. In particular,
\begin{align}\label{Source-L2}
 \big\|F\big(|u(t,\cdot)|\big)\big\|_{L^2}\lesssim M\langle t\rangle^{-\frac{n}{4}}h(t).
\end{align}
Moreover, \eqref{Amplitude-Change} and \eqref{Hardy-Stieltjes} with $\ell=1$ imply
\begin{align}\label{h-Integral}
 \int_0^\infty h(t)\dd t=\frac{M^{q-1}}{\beta}\int_0^M a^{-q}\ml T_1[F](a)\dd a\lesssim M^{q-1}\Lambda_F(M).
\end{align}
Since $h$ is nonincreasing, for every $t\geqslant2$, we have
\begin{align}\label{h-half}
 h\left(\frac{t}{2}\right) \lesssim t^{-1}M^{q-1}\Lambda_F(M).
\end{align}

Interpolation between $L^m$ and $L^\infty$ yields
\begin{align*}
 \|u(t,\cdot)\|_{L^r}\leqslant\|u(t,\cdot)\|_{L^m}^{\frac{m}{r}} \|u(t,\cdot)\|_{L^\infty}^{1-\frac{m}{r}}\lesssim M\langle t\rangle^{-\beta\left(1-\frac{1}{r}\right)}.
\end{align*}
Consequently,
\begin{align}\label{Source-Lr}
 \big\|F\big(|u(t,\cdot)|\big)\big\|_{L^r}\lesssim M\langle t\rangle^{-\beta\left(1-\frac{1}{r}\right)}h(t).
\end{align}
Together with \eqref{h-Integral}, the estimates in \eqref{Source-L2} and \eqref{Source-Lr} give
\begin{align}\label{Source-High-Integral}
 \int_0^\infty\Big(\big\|F\big(|u(t,\cdot)|\big)\big\|_{L^2}+\big\|F\big(|u(t,\cdot)|\big)\big\|_{L^r}\Big)\dd t\lesssim M^q\Lambda_F(M).
\end{align}

\subsection{Duhamel estimates and bootstrap closure}

Let us define the Duhamel term for the nonlinearity via
\begin{align*}
 G[u](t,x):=\int_0^tK_1(t-s)F\big(|u(s,x)|\big)\dd s.
\end{align*}
The estimates in \eqref{Linear-L1-Lm}-\eqref{Linear-Lr-Linfty}, together with \eqref{Source-L1-Integral} and \eqref{Source-High-Integral}, give
\begin{align}\label{NEW}
 \|G[u]\|_{X(2)}\lesssim M^q\Lambda_F(M).
\end{align}
It remains to derive the weighted estimates for $t\geqslant2$.

Applying \eqref{Linear-L1-Lm}, \eqref{Source-L1-Pointwise}, Lemma~\ref{Lemma-Convolution} and \eqref{Source-L1-Integral}, we obtain
\begin{align}\label{Duhamel-Lm}
 \|G[u](t,\cdot)\|_{L^m}&\lesssim\int_0^t\langle t-s\rangle^{-\delta_m}g(s)\dd s\notag\\
 &\lesssim\langle t\rangle^{-\delta_m}M^q\Lambda_F(M).
\end{align}
Here, $0<\delta_m<1$ follows from the choices of $m$ and $n$.

We next consider the $L^2$-based estimates. We split $G[u](t,x)$ into the following two parts:
\begin{align*}
 \sum\limits_{\theta=0}^1G^{(\theta)}[u](t,x):=\int_0^{\frac{t}{2}}K_1(t-s)F\big(|u(s,x)|\big)\dd s +\int_{\frac{t}{2}}^tK_1(t-s)F\big(|u(s,x)|\big)\dd s.
\end{align*}
For $j\in\{0,1\}$, we use \eqref{Linear-Energy-Mixed}. Concerning $s\in[0,\frac{t}{2}]$, one arrives at
\begin{align}\label{Duhamel-Energy-Early}
 \left\|\nabla^jG^{(0)}[u](t,\cdot)\right\|_{L^2}
 &\lesssim t^{-\frac{n}{4}-\frac{j}{2}}\int_0^{\frac{t}{2}}\big\|F\big(|u(s,\cdot)|\big)\big\|_{L^1}\dd s+\mathrm e^{-\frac{ct}{2}}\int_0^{\frac{t}{2}}\big\|F\big(|u(s,\cdot)|\big)\big\|_{L^2}\dd s\notag\\
 &\lesssim t^{-\frac{n}{4}-\frac{j}{2}}M^q\Lambda_F(M).
\end{align}
Concerning $s\in[\frac{t}{2},t]$, \eqref{Linear-L2-L2}, \eqref{Source-L2}, and the monotonicity of $h$ derive
\begin{align}
\left\|\nabla^jG^{(1)}[u](t,\cdot)\right\|_{L^2}&\lesssim M\langle t\rangle^{-\frac{n}{4}}h\left(\frac{t}{2}\right) \int_0^{\frac{t}{2}}\langle\tau\rangle^{-\frac{j}{2}}\dd\tau\notag\\
&\lesssim\langle t\rangle^{-\frac{n}{4}-\frac{j}{2}}M^q\Lambda_F(M),\label{Duhamel-Energy-Late}
\end{align}
via \eqref{h-half}. Combining \eqref{Duhamel-Energy-Early} and \eqref{Duhamel-Energy-Late}, we obtain
\begin{align}\label{Duhamel-Energy}
 \|\nabla^jG[u](t,\cdot)\|_{L^2}\lesssim\langle t\rangle^{-\frac{n}{4}-\frac{j}{2}}M^q\Lambda_F(M) \ \ \mbox{for}\ \ j\in\{0,1\}.
\end{align}

It remains to estimate the $L^\infty$ norm. An application of \eqref{Linear-Linfty-Mixed} yields
\begin{align*}
 \left\|G^{(0)}[u](t,\cdot)\right\|_{L^\infty}&\lesssim t^{-\beta}\int_0^{\frac{t}{2}}\big\|F\big(|u(s,\cdot)|\big)\big\|_{L^1}\dd s+\mathrm e^{-c_1t}\int_0^{\frac{t}{2}}\big\|F\big(|u(s,\cdot)|\big)\big\|_{L^r}\dd s\\
 &\lesssim t^{-\beta}M^q\Lambda_F(M),
\end{align*}
where the second contribution is exponentially small. For the late-time interval, we apply \eqref{Linear-Lr-Linfty} and \eqref{Source-Lr}. Since $h$ is nonincreasing, one has
\begin{align*}
\left\|G^{(1)}[u](t,\cdot)\right\|_{L^\infty}&\lesssim M\langle t\rangle^{-\beta\left(1-\frac{1}{r}\right)} h\left(\frac{t}{2}\right) \int_0^{\frac{t}{2}}\langle\tau\rangle^{-\frac{\beta}{r}}\dd\tau\\
& \lesssim\langle t\rangle^{-\beta}M^q\Lambda_F(M).
\end{align*}
via $\frac{\beta}{r}<1$ as well as \eqref{h-half}. Therefore,
\begin{align}\label{Duhamel-Linfty}
 \|G[u](t,\cdot)\|_{L^\infty} \lesssim\langle t\rangle^{-\beta}M^q\Lambda_F(M).
\end{align}

Combining the estimates for the linear problem, \eqref{NEW}, \eqref{Duhamel-Lm}, \eqref{Duhamel-Energy}, and \eqref{Duhamel-Linfty}, we arrive at
\begin{align}\label{Bootstrap-Inequality}
 \|u\|_{X(T)} \leqslant C_0\|(u_0,u_1)\|_{\ml A_n} +C\|u\|_{X(T)}^q\Lambda_F\bigl(\|u\|_{X(T)}\bigr),
\end{align}
provided that $\|u\|_{X(T)}\leqslant\delta_0$.

Let $\|(u_0,u_1)\|_{\ml A_n}\leqslant\varepsilon$ and assume the bootstrap bound
\begin{align}\label{Bootstrap-Assumption}
 \|u\|_{X(T)}\leqslant2C_0\varepsilon.
\end{align}
By \eqref{Lambda-Vanish}, we may choose $\varepsilon_0>0$ sufficiently small so that
\begin{align*}
 2C_0\varepsilon_0\leqslant\delta_0\ \ \mbox{as well as}\ \  C(2C_0\varepsilon_0)^{q-1}\Lambda_F(2C_0\varepsilon_0)\leqslant\frac{1}{4}.
\end{align*}
Since $\Lambda_F$ is nondecreasing, the same bounds hold for every $0<\varepsilon\leqslant\varepsilon_0$. Then \eqref{Bootstrap-Inequality} improves \eqref{Bootstrap-Assumption} to
\begin{align*}
 \|u\|_{X(T)}\leqslant\frac{3}{2}C_0\varepsilon.
\end{align*}
A standard continuity argument shows that the $X(T)$ norm remains bounded by $2C_0\varepsilon$ throughout the maximal existence interval. In particular,
\begin{align*}
 \sup_{0\leqslant t<T_{\max}}\|u(t,\cdot)\|_{L^\infty} \leqslant2C_0\varepsilon<\infty.
\end{align*}
The continuation criterion \eqref{Continuation-Criterion} implies $T_{\max}=\infty$. The estimates in Theorem~\ref{Thm-Main} follow from the definition of the $X(T)$ norm.

\section*{Acknowledgments}
Wenhui Chen is supported in part by the National Natural Science Foundation of China (grant No. 12301270) and the Guangdong Basic and Applied Basic Research Foundation (grant No. 2025A1515010240).

\end{document}